\documentclass[reqno, 11pt]{amsart}
\usepackage{amsthm, amsmath, amssymb, graphicx, tikz,enumerate,paralist,bbm,setspace,bbm}
\usepackage[left=1in,right=1in,top=1in,bottom=1in]{geometry}

\usepackage[color,final]{showkeys} %add in 'final' into parameter to remove showkeys
\usepackage[colorlinks=false, pdfstartview=FitV, %linkcolor=green, citecolor=green, urlcolor=green,
pagebackref=false,hidelinks]{hyperref}

% showkeys font
\definecolor{refkey}{gray}{.75}
\definecolor{labelkey}{gray}{.5}
% \numberwithin{equation}{section}

\usetikzlibrary{calc,intersections,through,backgrounds,shapes,patterns}
\tikzstyle{p}+=[fill=black, circle, minimum width = 1pt, inner sep =
1pt]

% ------   Theorem Styles -------
\newtheorem{theorem}{Theorem}[section]

\newtheorem{lemma}[theorem]{Lemma}

\theoremstyle{definition}
\newtheorem{definition}[theorem]{Definition}
\newtheorem{example}[theorem]{Example}

\theoremstyle{remark}

% ----- Delimiters ----
\newcommand{\abs}[1]{\left\lvert#1\right\rvert}

\newcommand{\norm}[1]{\left\lVert#1\right\rVert}

\newcommand{\x}{\times}
\newcommand{\e}{\epsilon}

\newcommand{\EE}{\mathbb{E}}
\newcommand{\RR}{\mathbb{R}}

\newcommand{\ZZ}{\mathbb{Z}}

\newcommand{\cF}{\mathcal{F}}

\newcommand{\tg}{\tilde{g}}
\newcommand{\tf}{\tilde{f}}

\title[An arithmetic transference proof of a relative Szemer\'edi theorem]{An arithmetic transference proof of \\ a relative Szemer\'edi theorem}
%\date{\today}
\author{Yufei Zhao}
\address{Department of Mathematics\\
MIT\\
Cambridge\\
MA 02139-4307}
\email{yufeiz@math.mit.edu}

\thanks{This work was done while the author was an intern at Microsoft
  Research New England.}
  %The author was supported by a Microsoft Research PhD Fellowship.}

\begin{document}

\maketitle

\begin{abstract}
  Recently, Conlon, Fox, and the author gave a new proof of a relative
  Szemer\'edi theorem, which was the main novel ingredient in the proof of
  the celebrated Green-Tao theorem that the primes contain arbitrarily
  long arithmetic progressions. Roughly speaking, a relative
  Szemer\'edi theorem says that if $S$ is a set of integers satisfying
  certain conditions, and $A$ is a subset of $S$ with positive relative
  density, then $A$ contains long arithmetic progressions, and our
  recent results show that $S$ only needs to satisfy a so-called
  linear forms condition.

  This note contains an alternative proof of the new relative Szemer\'edi
  theorem, where we directly transfer Szemer\'edi's theorem, instead of
  going through the hypergraph removal lemma. This approach provides a
  somewhat more direct route to establishing the result, and it gives better quantitative bounds.

  The proof has three main ingredients: (1) a transference
  principle/dense model theorem of Green-Tao and Tao-Ziegler (with
  simplified proofs given later by Gowers, and independently,
  Reingold-Trevisan-Tulsiani-Vadhan) applied with a
  discrepancy/cut-type norm (instead of a Gowers uniformity norm as it
  was applied in earlier works), (2) a counting lemma
  established by Conlon, Fox, and the author, and (3) Szemer\'edi's
  theorem as a black box.
\end{abstract}

\section{Introduction}

The celebrated Green-Tao theorem~\cite{GT08} states that the primes
contain arbitrarily long arithmetic progressions (AP). A key ingredient
in their work is a relative Szemer\'edi theorem. Szemer\'edi's theorem~\cite{Sze75}
states that any subset of the integers with positive upper density
contains arbitrarily long arithmetic progressions. A relative
Szemer\'edi theorem is a result where the ground set is no longer $\ZZ$ but some sparse pseudorandom subset (or more generally some
measure).

Green and Tao proved a relative Szemer\'edi theorem provided that the ground
set satisfies certain pseudorandomness conditions known as the linear
forms condition and the correlation condition. They then constructed
a majorizing measure to the primes, using ideas from the work of Goldston
and Y{\i}ld{\i}r{\i}m~\cite{GY03} (subsequently simplified
in~\cite{Taonote}), so that this majorizing measure satisfies the desired pseudorandomness conditions.

Recently, Conlon, Fox, and the author~\cite{CFZrelsz} gave a new proof
of Green and Tao's relative Szemer\'edi theorem, requiring simpler
pseudorandomness hypotheses on the ground set. We showed that a weak
version of Green and Tao's linear forms condition is sufficient. A
precise definition of our linear forms condition will be given in \S\ref{sec:def}.

In \cite{CFZrelsz}, we obtained a relative extension of the famous
hypergraph removal lemma \cite{Gow07, NRS06, RS04, RS06, Tao06jcta},
from which we deduced our relative Szemer\'edi theorem via standard
arguments. Such an approach was also taken by Tao~\cite{Tao06jam} in
his work on
constellations in the Gaussian primes, but our approach in \cite{CFZrelsz}
requires less stringent pseudorandomness hypotheses.

In this note, we give an alternative approach to proving the relative
Szemer\'edi theorem in \cite{CFZrelsz}. Instead of going through the
hypergraph removal lemma, we use Szemer\'edi's theorem directly as a
black box. To transfer Szemer\'edi's theorem to the sparse setting, we
apply the dense model theorem of Green-Tao~\cite{GT08} and
Tao-Zieger~\cite{TZ08}, which was subsequently simplified by
Gowers~\cite{Gow10}, and independently Reingold, Trevisan, Tulsiani,
and Vadhan \cite{RTTV08}. This tool lets us model a subset of a sparse
pseudorandom set of integers by a dense subset. The dense model is a
good approximation of the original set with respect to a
discrepancy-type norm (similar to the cut metric for graphs). This
contrasts previous proofs the Green-Tao theorem
\cite{GT08,Gow10,RTTV08} where the dense model theorem is applied with
respect to the Gowers uniformity norm, which gives a stronger notion of
approximation.

Another important ingredient in the proof
is the relative counting lemma of \cite{CFZrelsz}, which implies that the
dense model behaves similarly to the original set in the number of
arithmetic progressions.

The arithmetic transference approach presented here
 establishes the new relative Szemer\'edi theorem in a more direct fashion. It also
gives better quantitative bounds than \cite{CFZrelsz}. Indeed, instead of going through
the hypergraph removal lemma, which currently has an Ackermann-type dependence
on the bounds (due to the application of the hypergraph regularity lemma),
we can now use Szemer\'edi's theorem as a black box and
automatically transfer the best quantitative bounds available (currently
the state-of-art is \cite{San11} for 3-term APs, \cite{GT09r4} for 4-term APs, and
\cite{Gow01} for longer APs). This answers a question that was
left open in \cite{CFZrelsz}. The approach
presented here, however, is less general compared to \cite{CFZrelsz},
since it does not provide a relative hypergraph removal lemma, nor
does it give a more general sparse regularity approach to
hypergraphs.

\medskip

The main theorem is stated in \S\ref{sec:def}. In
\S\ref{sec:trans}, we apply a dense model theorem to find a dense
approximation of the original set. In \S\ref{sec:count}, we apply a
counting lemma to show that the dense model has approximately the same
number of $k$-term APs as the original set. Finally in
\S\ref{sec:relsz}, we put everything together and apply
Szemer\'edi's theorem as a black box to conclude the proof.

\section{Definitions and results} \label{sec:def}

\subsection*{Notation} \emph{Dependence on $N$.} We consider functions
$\nu = \nu^{(N)}$, where $N$ (usually suppressed) is assumed to be
some large integer. We write $o(1)$ for a quantity that tends to zero
as $N \to \infty$ along some subset of $\ZZ$. If the rate at which the
quantity tends to zero depends on some other parameters (e.g., $k,
\delta$), then we put these parameters in the subscript (e.g., $o_{k,\delta}(1)$).

\noindent
\emph{Expectation.}  We write $\EE[f(x_1, x_2, \dots) \vert P]$ for
the expectation of $f(x_1, x_2, \dots)$ when the variables are chosen
uniformly out of all possibilities satisfying $P$.

\medskip

We shall use, as a black box, the following weighted version of
Szemer\'edi's theorem as formulated, for example, in
\cite[Prop.~2.3]{GT08}. It may be helpful to think of $f$ as the
indicator function $1_A$ of some set $A \subseteq \ZZ_N$. It will be easier for us to work in $\ZZ_N :=
\ZZ/N\ZZ$ as opposed to $[N] := \{1, \dots, N\}$, although these two
settings are easily seen to be equivalent.

\begin{theorem}
  [Szemer\'edi's theorem, weighted version]
  \label{thm:sz-weighted}
  Let $k \geq 3$ and $0 < \delta \leq 1$ be fixed. Let $f \colon \ZZ_N \to [0,1]$ be a
  function satisfying
  $\EE[f] \geq \delta$. Then
  \begin{equation} \label{eq:sz-weighted}
  \EE[ f(x)f(x+d)f(x + 2d) \cdots f(x+(k-1)d) \vert x,d \in \ZZ_N]
  \geq c(k,\delta) - o_{k,\delta}(1)
\end{equation}
for some constant $c(k,\delta) > 0$ which does not depend on $f$ or
$N$.
\end{theorem}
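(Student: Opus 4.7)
The plan is to deduce the weighted statement from the classical (unweighted) Szemer\'edi theorem via a simple thresholding argument, so that Theorem~\ref{thm:sz-weighted} is essentially a cosmetic reformulation of the standard theorem and no new ideas are required.

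First, I would pass from $f$ to an indicator by thresholding. Set $A := \{x \in \ZZ_N : f(x) \geq \delta/2\}$. Since $f$ takes values in $[0,1]$, we have
\[
\delta \leq \EE[f] \leq \PP[x \in A] \cdot 1 + (1 - \PP[x \in A]) \cdot (\delta/2) \leq \PP[x \in A] + \delta/2,
\]
so $A$ has density at least $\delta/2$ in $\ZZ_N$. This step is just Markov's inequality in disguise, and it trades the bounded weight $f$ for a subset of positive density, at the cost of a factor $\delta/2$ in density.

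Next, I would apply the ordinary Szemer\'edi theorem in $\ZZ_N$ to $A$: for $N$ sufficiently large in terms of $k$ and $\delta$, the number of pairs $(x,d) \in \ZZ_N^2$ for which the full $k$-AP $x, x+d, \ldots, x+(k-1)d$ lies in $A$ is at least $(c'(k, \delta/2) - o_{k,\delta}(1))\,N^2$, for some constant $c'(k,\delta/2) > 0$. Szemer\'edi's theorem is most often stated for subsets of $[N]$, but the cyclic and interval versions are equivalent up to an additive $o_{k,\delta}(1)$ error (one restricts to $x, x+d, \ldots, x+(k-1)d \in [N/k, 2N/k]$, say, inside $\ZZ_N$ to rule out wraparound), and trivial APs with $d=0$ contribute only $O(1/N)$, all of which is absorbed into the error term.

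Finally, on each such AP in $A$, every factor $f(x+id)$ is at least $\delta/2$, so
\[
\EE[f(x) f(x+d) \cdots f(x+(k-1)d)] \geq (\delta/2)^k \cdot \PP[x,\, x+d,\, \ldots,\, x+(k-1)d \in A \mid x,d \in \ZZ_N],
\]
and the right-hand side is at least $(\delta/2)^k (c'(k, \delta/2) - o_{k,\delta}(1))$ by the previous step. This yields \eqref{eq:sz-weighted} with $c(k,\delta) := (\delta/2)^k \, c'(k, \delta/2) > 0$. I do not anticipate any genuine obstacle: the argument is a routine averaging reduction, and the only bookkeeping issue is the $[N]$-vs.-$\ZZ_N$ translation, which costs only $o_{k,\delta}(1)$.
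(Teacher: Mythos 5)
Your argument is essentially correct, and it is the standard deduction that the paper itself alludes to: note that the paper does not prove Theorem~\ref{thm:sz-weighted} at all, but quotes it as a black box from \cite[Prop.~2.3]{GT08}, remarking that it follows from Szemer\'edi's theorem (in Gowers' quantitative form \cite{Gow01}) together with a Varnavides-type averaging argument \cite{Var59}. Your thresholding step (taking $A=\{x: f(x)\ge \delta/2\}$, which has density at least $\delta/2$, and then bounding the weighted count below by $(\delta/2)^k\,\PP[x,x+d,\dots,x+(k-1)d\in A]$) is fine, and the resulting constant $(\delta/2)^k c'(k,\delta/2)$ is acceptable, since the theorem only asserts the existence of some positive $c(k,\delta)$ and the relative theorem later simply inherits whatever constant this black box supplies.

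The one step you should not label ``the ordinary Szemer\'edi theorem'' is the assertion that $A$ contains at least $(c'(k,\delta/2)-o_{k,\delta}(1))N^2$ pairs $(x,d)$ forming $k$-term APs. Szemer\'edi's theorem per se only guarantees a single nontrivial progression in a dense set; the upgrade from existence to a count of order $N^2$ is exactly the Varnavides averaging argument (average over all sub-progressions of a fixed length $m(k,\delta)$, each dense copy of which must contain a $k$-AP, and note that each AP is counted a bounded number of times). This is routine but it is a genuine additional step---it is precisely where the supersaturation comes from---so it should be invoked or sketched explicitly, as the paper does by citing \cite{Var59}. Also, for the $[N]$-versus-$\ZZ_N$ bookkeeping you only need the easy direction here: every integer $k$-AP inside $[N]$ is already a $k$-AP in $\ZZ_N$, so no restriction to a middle interval is required; the restriction you describe is what one needs for the reverse transfer, not for this one.
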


Gowers' results \cite{Gow01} (along with a Varnavides-type
\cite{Var59} averaging argument) imply that
Theorem~\ref{thm:sz-weighted} holds with $c(k,\delta) =
\exp(-\exp(\delta^{-c_k}))$ with $c_k = 2^{2^{k+9}}$ (see
\cite{San11} and \cite{GT09r4} for the current best bounds for $k=3$ and $4$ respectively).

A \emph{relative Szemer\'edi theorem} is an extension of
Theorem~\ref{thm:sz-weighted} of the following form. Instead of $0 \leq f \leq 1$ as in
Theorem~\ref{thm:sz-weighted}, we now assume that $0 \leq f(x) \leq
\nu(x)$ for all $x \in \ZZ_N$, where $\nu \colon \ZZ_N \to \RR_{\geq 0}$ is some function
(also called a majorizing measure) that satisfies certain
pseuodorandomness conditions. Here the function $\nu$ is normalized so
that $\EE[\nu] = 1 + o(1)$. For instance, one can think of $\nu$ as
$\frac{N}{\abs{S}}1_S$ for some pseudorandom subset $S \subseteq
\ZZ_N$, and $f$ as $1_A \nu$ with some $A \subseteq S$. So in this case
\eqref{eq:sz-weighted} says that $A$ contains many $k$-term APs when
$N$ is sufficiently large.

As in \cite{CFZrelsz}, the pseudorandomness condition that we assume
on $\nu$ is the linear forms condition, as follows.

\begin{definition}[Linear forms condition]
  \label{def:Z-lfc}
  A nonnegative function $\nu = \nu^{(N)} : \mathbb{Z}_N
  \rightarrow \mathbb{R}_{\geq 0}$ is said to obey the
  \emph{$k$-linear forms condition} if one has
  \begin{equation}
    \label{eq:Z-lfc}
    \EE \Bigl[
    \prod_{j=1}^k \prod_{\omega \in \{0,1\}^{[k]\setminus\{j\}}} \nu
    \Bigl( \sum_{i=1}^k (i-j) x_i^{(\omega_i)} \Bigr)^{n_{j,\omega}}
    \Big\vert
    x_1^{(0)}, x_1^{(1)}, \dots, x_k^{(0)},x_k^{(1)} \in \ZZ_N
    \Bigr]  = 1 + o(1)
  \end{equation}
  for any choice of exponents $n_{j,\omega} \in \{0,1\}$.
\end{definition}

\begin{example}
  \label{ex:Z-lfc}
  For $k = 3$, condition~\eqref{eq:Z-lfc} says that
  \begin{multline*}
    \EE[\nu(y+2z) \nu(y'+2z) \nu(y+2z') \nu(y'+2z')
    \nu(-x+z) \nu(-x'+z) \nu(-x+z') \nu(-x'+z') \\
    \nu(-2x-y) \nu(-2x'-y) \nu(-2x-y') \nu(-2x'-y')
    \vert x,x',y,y',z,z' \in \ZZ_N] = 1 + o(1)
  \end{multline*}
  and similar conditions hold if one or more of the twelve $\nu$ factors in the
  expectation are erased.
\end{example}

The main result of this note is the following theorem.

\begin{theorem}
  [Relative Szemer\'edi theorem]
  \label{thm:rel-sz}
  Let $k \geq 3$ and $0 < \delta \leq 1$ be fixed. Let $\nu \colon \ZZ_N \to \RR_{\geq 0}$ satisfy the $k$-linear
  forms condition. Assume that $N$ is sufficiently large and
  relatively prime to $(k-1)!$. Let $f \colon \ZZ_N \to \RR_{\geq 0}$ satisfy $0
  \leq f(x) \leq \nu(x)$ for all $x \in \ZZ_N$ and $\EE[f] \geq
  \delta$. Then
  \begin{equation} \label{eq:rel-sz}
  \EE[ f(x)f(x+d)f(x + 2d) \cdots f(x+(k-1)d) \vert x,d \in \ZZ_N]
  \geq c(k,\delta) - o_{k,\delta}(1),
\end{equation}
where $c(k,\delta)$ is the same constant which appears in
Theorem~\ref{thm:sz-weighted}. The rate at which the $o_{k,\delta}(1)$
term goes to zero
depends not only on $k$ and $\delta$ but also the rate of convergence in the $k$-linear forms condition for $\nu$.
\end{theorem}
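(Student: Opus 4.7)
The plan is to follow the three-step strategy announced in the introduction: (1) apply a dense model theorem to replace $f$ by a bounded function $\tilde f : \ZZ_N \to [0,1]$ that approximates $f$ in an appropriate discrepancy/cut-type norm $\|\cdot\|$; (2) invoke the relative counting lemma of \cite{CFZrelsz} to show that $f$ and $\tilde f$ have essentially the same number of $k$-APs; (3) apply Theorem~\ref{thm:sz-weighted} to $\tilde f$ to obtain many $k$-APs, and transfer the conclusion back to $f$ via step (2).

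For the first step I would use the dense model theorem in the simplified form of Gowers and Reingold–Trevisan–Tulsiani–Vadhan, applied with the cut-type norm
\[
\|g\| := \sup \abs{\EE[g(x) \phi_1(x+d) \phi_2(x+2d) \cdots \phi_{k-1}(x+(k-1)d) \,|\, x,d \in \ZZ_N]},
\]
where the supremum is over test functions $\phi_1,\dots,\phi_{k-1}$ pointwise bounded by $\nu$ (and symmetrized over the AP slot occupied by $g$). The $k$-linear forms condition of Definition~\ref{def:Z-lfc} is tailored so that, after a few rounds of Cauchy–Schwarz, the dual-norm bound $\sup_\phi \abs{\EE[(\nu - 1)\phi]} = o(1)$ reduces to averages of the form appearing in \eqref{eq:Z-lfc}, each of which is $1 + o(1)$. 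The dense model theorem then produces $\tilde f$ with $0 \leq \tilde f \leq 1$, $\EE[\tilde f] = \EE[f] + o(1) \geq \delta - o(1)$, and $\|f - \tilde f\| = o(1)$.

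For the second step, a standard $k$-factor hybrid argument reduces the comparison of AP counts between $f$ and $\tilde f$ to bounding $\EE[g_1(x) g_2(x+d) \cdots g_k(x+(k-1)d)]$ when one $g_i = f - \tilde f$ is small in $\|\cdot\|$ and the remaining $g_j$ satisfy $|g_j| \leq \nu$. The Conlon–Fox–Zhao counting lemma of \cite{CFZrelsz} bounds exactly this quantity by $o(1)$, again via the linear forms condition. For the third step, Theorem~\ref{thm:sz-weighted} applied to $\tilde f$ yields an AP count of at least $c(k,\delta) - o_{k,\delta}(1)$ (the $-o(1)$ slack in $\EE[\tilde f]$ being absorbed into the error term), which transfers to $f$ by step (2) to give \eqref{eq:rel-sz}. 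The coprimality of $N$ with $(k-1)!$ is used only to ensure that $x, x+d, \dots, x+(k-1)d$ are distinct in $\ZZ_N$ whenever $d \neq 0$, so that the count represents genuine length-$k$ progressions.

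The main obstacle is choosing the norm $\|\cdot\|$ so that a \emph{single} pseudorandomness assumption — Definition~\ref{def:Z-lfc} — simultaneously powers both the dense model theorem (which needs $\nu - 1$ to be small in the dual norm) and the counting lemma (which needs the norm to control $k$-AP counts of $\nu$-bounded functions). The symmetric Gowers $U^{k-1}$-norm, the natural tool for $k$-AP counts on bounded functions, would force a correlation-type hypothesis on $\nu$ beyond linear forms; the asymmetric cut-type norm above, with $\nu$-majorization baked into its test class, is both weak enough for the dense-model hypothesis to follow from linear forms alone and strong enough to support a relative counting lemma.
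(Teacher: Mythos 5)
There is a genuine gap, and it sits exactly at the point your last paragraph identifies as the crux: the choice of norm. You take the test class to be functions $\phi_1,\dots,\phi_{k-1}$ pointwise bounded by $\nu$. With that class the dense model theorem (Lemma~\ref{lem:GTZ-DMT}) does not apply as you claim: it requires a family $\cF$ of test functions taking values in $[-1,1]$, and it requires $\lvert\langle \nu-1,\varphi\rangle\rvert\le\e'$ not just for $\varphi\in\cF$ but for all products of up to $k$ members of $\cF$ (the class $\cF^k$). The dual functions $x\mapsto\EE[\phi_1(x+d)\cdots\phi_{k-1}(x+(k-1)d)\mid d\in\ZZ_N]$ built from $\nu$-majorized $\phi_i$ are not pointwise bounded by $1$ under the linear forms condition alone, and the family of such dual functions is not closed under multiplication; bounding correlations of $\nu-1$ against products of dual functions is precisely where Green--Tao needed their correlation condition, which Definition~\ref{def:Z-lfc} does not supply. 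So your ``strong'' norm does make step (2) a trivial hybrid/telescoping argument, but it pushes all the difficulty back into the dense-model step, where ``a few rounds of Cauchy--Schwarz'' reducing to \eqref{eq:Z-lfc} is not available. Relatedly, the attribution in your step (2) is off: the counting lemma of \cite{CFZrelsz} (Lemma~\ref{lem:AP-counting} here) is not the telescoping bound for your norm; its content is that the much \emph{weaker} discrepancy/cut norm, with test functions $u_i\colon\ZZ_N^{k-2}\to[0,1]$ depending on all but one of the $k-1$ hypergraph variables and with the model $\tf$ bounded by $1$, already controls $k$-AP counts of a $\nu$-majorized function; that nontrivial input cannot be replaced by a one-line hybrid argument, nor does it say anything about your $\nu$-majorized test class.

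The paper's proof makes the opposite trade. The discrepancy-pair condition \eqref{eq:disc-pair} uses bounded test functions $u_1,\dots,u_r\colon G^{r-1}\to[0,1]$ with $r=k-1$ and the maps $\psi_j$; the generalized convolutions $(u_1,\dots,u_r)^*_\psi$ form a class closed under multiplication, so Lemma~\ref{lem:GTZ-DMT} applies with $\cF^k=\cF$ (this is Lemma~\ref{lem:disc-pair-transference}), and the hypothesis $\lvert\langle\nu-1,\varphi\rangle\rvert=o(1)$ follows from the linear forms condition by the Cauchy--Schwarz chain \eqref{eq:nu-1-disc}. The hard work is then delegated to the CFZ counting lemma, used as a black box. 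Two smaller corrections: the coprimality of $N$ with $(k-1)!$ is used to pass a discrepancy pair from one $\psi_j$ to all of them by rescaling variables (invertibility of $1,\dots,k-1$ in $\ZZ_N$), not merely to make the AP entries distinct; and to meet the hypothesis $\EE[f]\le 1$ of the dense model theorem one rescales $f$ (replacing $f$ by $\delta f/\EE[f]$ if $\EE[f]>1$), after which $\EE[\tf]=\EE[f]\ge\delta$ holds exactly rather than up to $o(1)$.
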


This theorem was proved in \cite{CFZrelsz} without the additional
conclusion that $c(k,\delta)$ can be taken to be the same as in
Theorem~\ref{thm:sz-weighted}. Indeed, the proof in \cite{CFZrelsz}
uses the hypergraph removal lemma as a black box, so that the
constants $c(k,\delta)$ there are much worse, with an Ackermann-type
dependence due to the use of hypergraph regularity. In \cite{GT08},
Green and Tao also transfered Szemer\'edi's theorem directly to obtain the same constants
$c(k,\delta)$ as in Theorem~\ref{thm:sz-weighted}, but under stronger pseudorandomness hypotheses for
$\nu$. So Theorem~\ref{thm:rel-sz} combines the conclusions of the two relative Szemer\'edi theorems in  \cite{CFZrelsz} and \cite{GT08}.

\section{Dense model theorem} \label{sec:trans}

In this section, we show that the $f$ in Theorem~\ref{thm:rel-sz} can
be modeled by a function $\tf \colon \ZZ_N \to [0,1]$. We state our
results in terms of a finite abelian group $G$ (written additively),
but there is no loss in thinking $G = \ZZ_N$. For $x = (x_1, \dots, x_r) \in G^r$, and $I \subseteq [r]$, we write $x_I = (x_i)_{i \in I}$.

\begin{definition}
  Let $G$ be a finite abelian group, $r$ be a positive integer,
  $\psi \colon G^r \to G$ be a surjective homomorphism, and $f, \tf \colon G \to \RR_{\geq 0}$ be two functions. We say
  that $(f,\tf)$ is an \emph{$(r,\e)$-discrepancy pair with respect to
    $\psi$} if
\begin{equation} \label{eq:disc-pair}
  \Bigl\lvert \EE\Bigl[ (f(\psi(x)) - \tf (\psi(x))) \prod_{i=1}^r u_i(x_{[r]\setminus\{i\}})
  \Big\vert
  x \in G^r
  \Bigr]\Bigr\rvert \leq \e
\end{equation}
for all collections of functions $u_1, \dots, u_r \colon G^{r-1}
  \to [0,1]$.
\end{definition}

\begin{example}
When $r = 2$ and $\psi(x,y) = x+y$, \eqref{eq:disc-pair} says
\[
\lvert \EE[(f(x+y) - \tf(x+y)) u_1(y)u_2(x) | x,y \in G] \rvert \leq \e.
\]
In other words, this says that the two weighted graphs $g,\tg \colon G \x G \to \RR_{\geq 0}$ given by $g(x,y) = f(x+y)$ and $\tg(x,y) = \tf(x+y)$ satisfy $\norm{g-\tg}_{\square} \leq \e$, where $\norm{\cdot}_{\square}$ is the cut norm for bipartite graphs.

When $r = 3$ and $\psi(x,y,z) = x+y+z$, \eqref{eq:disc-pair} says
\[
\lvert \EE[(f(x+y+z) - \tf(x+y+z)) u_1(y,z)u_2(x,z)u_3(x,y) | x,y,z \in G] \rvert \leq \e.
\]
\end{example}

The following key lemma says that any $0 \leq f \leq \nu$ can be approximated by a $0 \leq \tf \leq 1$ in the above sense.

\begin{lemma}
  \label{lem:disc-pair-transference}
  For every $\e > 0$ there is an $\e' = \exp(-\e^{-O(1)})$ such
  that the following holds:

  Let $G$ be a finite abelian group, $r$ be a positive integer, and
  $\psi \colon G^r \to G$ be a surjective homomorphism. Let $f, \nu
  \colon G \to \RR_{\geq 0}$ be such that $0 \leq f \leq
  \nu$, $\EE[f] \leq 1$, and $(\nu, 1)$ is an $(r,\e')$-discrepancy pair with respect to
  $\psi$. Then there exists a function $\tf \colon G \to [0,1]$ so that $\EE[\tf] = \EE[f]$ and $(f,\tf)$ is an $(r,\e)$-discrepancy pair with respect to
  $\psi$.
\end{lemma}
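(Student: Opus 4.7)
The plan is to deduce the lemma from the abstract dense model theorem of Green--Tao~\cite{GT08} and Tao--Ziegler~\cite{TZ08}, in the Hahn--Banach formulations of Gowers~\cite{Gow10} and Reingold--Trevisan--Tulsiani--Vadhan~\cite{RTTV08}, applied to the family of test functions on $G^r$
\[
\mathcal{F} = \Bigl\{x\mapsto\prod_{i=1}^r u_i(x_{[r]\setminus\{i\}}) : u_i\colon G^{r-1}\to[0,1]\Bigr\}.
\]
First I pull $f$ and $\nu$ back to $G^r$ by setting $F=f\circ\psi$ and $N=\nu\circ\psi$; surjectivity of the homomorphism $\psi$ pushes the uniform measure on $G^r$ forward to the uniform measure on $G$, so $0\le F\le N$, $\EE[F]=\EE[f]$, and the hypothesis reads $\sup_{F_0\in\mathcal{F}}|\EE[(N-1)F_0]|\le\e'$. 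The conclusion asks for $\tilde F=\tf\circ\psi$ with $\tf\colon G\to[0,1]$, $\EE[\tf]=\EE[f]$, and $\sup_{F_0\in\mathcal{F}}|\EE[(F-\tilde F)F_0]|\le\e$.

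Writing $\norm{h}_\square=\sup_{F_0\in\mathcal{F}}|\EE[hF_0]|$, the core task is to produce a dense model $\tilde F\colon G^r\to[0,1]$ with $\norm{F-\tilde F}_\square\le\e/3$, postponing the requirements that $\tilde F$ factor through $\psi$ and have the correct mean. Suppose for contradiction no such $\tilde F$ exists; Hahn--Banach separation yields $\phi=\sum_j c_j F_{0,j}$ with $F_{0,j}\in\mathcal{F}$ and $\sum_j|c_j|\le 1$ (hence $\phi\in[-1,1]$ pointwise), satisfying $\EE[F\phi]>\EE[\eta\phi]+\e/3$ for every $\eta\colon G^r\to[0,1]$. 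Choose a polynomial $P$ on $[-1,1]$ of degree $D=O(1/\e)$ and coefficient $\ell^1$-norm $L=\exp(O(1/\e))$ (standard Chebyshev) with $|P(t)-\max(t,0)|\le\e/20$. Using $F\le N$ and $\max(\phi,0)\le P(\phi)+\e/20$ pointwise,
\[
\EE[F\phi]\le\EE[N\max(\phi,0)]\le\EE[N P(\phi)]+O(\e)=\EE[P(\phi)]+\EE[(N-1)P(\phi)]+O(\e).
\]
Expand $P(\sum_j c_j F_{0,j})$ as a signed combination of products of the $F_{0,j}$ of total $\ell^1$-weight at most $L$. The crucial structural observation is that $\mathcal{F}$ is closed under pointwise multiplication, since $\prod_i u_i(x_{[r]\setminus\{i\}})\cdot\prod_i u'_i(x_{[r]\setminus\{i\}})=\prod_i(u_i u'_i)(x_{[r]\setminus\{i\}})$ with $[0,1]$-valued factors. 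Every monomial therefore lies in $\mathcal{F}$, so $|\EE[(N-1)P(\phi)]|\le L\e'$; and for $\e'\le\exp(-\e^{-O(1)})$ this is $O(\e)$. Combining with $\EE[P(\phi)]\le\max_\eta\EE[\eta\phi]+O(\e)$ (via the same polynomial approximation) produces $\EE[F\phi]\le\max_\eta\EE[\eta\phi]+O(\e)$, contradicting the separation once the constants are chosen so that the total $O(\e)$ error stays below $\e/3$.

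Given the dense model $\tilde F$ on $G^r$, I recover a valid $\tf$ by (i) symmetrising $\tilde F$ over $\ker\psi$, which preserves $[0,1]$-valuedness and does not increase $\norm{\cdot}_\square$-distance to $F$ because $\mathcal{F}$ is translation-invariant and $F$ is itself $\ker\psi$-invariant; the symmetrised function factors through $\psi$, thereby defining $\tf$. Then (ii) an $O(\e)$-size convex interpolation of $\tf$ with the constant $\EE[f]\in[0,1]$ enforces $\EE[\tf]=\EE[f]$ exactly, at a further cost of $O(\e)$ in $\norm{\cdot}_\square$. The principal obstacle is calibrating the polynomial-approximation step: since expanding $P(\phi)$ produces $\exp(O(1/\e))$ monomials, the pseudorandomness error must shrink to $\e'\le\exp(-\e^{-O(1)})$ for the $L\e'$ contribution to be absorbed, which is exactly the quantitative dependence claimed by the lemma.
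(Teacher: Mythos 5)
Your argument is correct in substance but is organized genuinely differently from the paper's. The paper keeps the ground set equal to $G$ and applies the Green--Tao--Ziegler dense model theorem (Lemma~\ref{lem:GTZ-DMT}, used as a black box) to the family of generalized convolutions $(u_1,\dots,u_r)^*_\psi(x)=\EE[\prod_i u_i(y_{[r]\setminus\{i\}})\mid \psi(y)=x]$; the only work there is to check that this family is closed under multiplication, which is done by precisely the kernel-averaging identity ($y'=y+z$ with $\psi(z)=0$) that underlies your symmetrization step. You instead lift $f,\nu$ to $G^r$, where closure of the product family $\mathcal F$ under multiplication is trivial, build a dense model $\tilde F$ on $G^r$, and then descend by averaging over $\ker\psi$. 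Your justification of the descent is valid: since $F=f\circ\psi$ is $\ker\psi$-invariant, $\EE[(F-\tilde F')F_0]=\EE_{z\in\ker\psi}\EE_x[(F-\tilde F)(x)F_0(x-z)]$, and each translate $F_0(\cdot-z)$ again lies in $\mathcal F$, so the discrepancy does not grow and the averaged function factors through $\psi$. In short, the two proofs are dual packagings of the same mechanism --- the paper pushes the test functions down to $G$, you push the model down from $G^r$ --- with the same quantitative dependence $\e'=\exp(-\e^{-O(1)})$; your version trades the convolution-closure computation for the descent step, and spends extra effort re-sketching the dense model theorem, which the paper simply cites.

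Two concrete points to repair. First, your mean-correction recipe does not work as stated: if $\tf_{\mathrm{new}}=(1-\lambda)\tf+\lambda c$ with $c=\EE[f]$ a constant, then $\EE[\tf_{\mathrm{new}}]=\EE[f]$ forces $\lambda=1$ unless the means already agree. A correct $O(\e)$-cost fix: if $\EE[\tf]>\EE[f]$, rescale $\tf$ by $\EE[f]/\EE[\tf]$; if $\EE[\tf]<\EE[f]$, interpolate toward the constant $1$ with $\lambda=(\EE[f]-\EE[\tf])/(1-\EE[\tf])$; in both cases the perturbation pairs against any $F_0\in[0,1]$ with error at most $\lvert\EE[\tf]-\EE[f]\rvert\le\e/3$, since $\mathcal F$ contains the constant $1$. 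Better still, the correction is unnecessary: quote Lemma~\ref{lem:GTZ-DMT} as a black box (its conclusion already gives $\EE[\tilde F]=\EE[F]$ exactly), and note that averaging over $\ker\psi$ preserves the mean, so $\EE[\tf]=\EE[f]$ comes for free. Second, your Hahn--Banach/polynomial paragraph is only a sketch of that known theorem; it is citable, so nothing essential is missing, but if you insist on including it you would need the standard care about which convex sets are separated and the resulting loss of exact mean, which is exactly what created the first issue.
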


The proof of Lemma~\ref{lem:disc-pair-transference} uses the
dense model theorem of Green-Tao~\cite{GT08} and
Tao-Ziegler~\cite{TZ08}, which was later simplified
in \cite{Gow10} and \cite{RTTV08}. The expository note \cite{RTTVnote}
has a nice and short write-up of the proof of the dense model theorem,
and we quote the statement from there.

Let $X$ be a finite set. For any two functions $f,g \colon X \to \RR$, we write $\langle f,g \rangle = \EE[f(x)g(x) \vert x \in X]$.
For $\cF$ a collection of functions $\varphi \colon X \to
[-1,1]$, we write $\cF^k$ to mean the collections of all functions of the form
$\prod_{i=1}^{k'} \varphi_i$, where $\varphi_i \in \cF$ and $k' \leq k$. In
particular, if $\cF$ is closed under multiplication, then $\cF^k = \cF$.

\begin{lemma} [Green-Tao-Ziegler dense model theorem]
  \label{lem:GTZ-DMT}
  For every $\e > 0$, there is a $k = (1/\e)^{O(1)}$ and an $\e' =
  \exp(-(1/\e)^{O(1)})$ such that the following holds:

  Suppose that $\cF$ is a collection of functions $\varphi
  \colon X \to [-1,1]$ on a finite set $X$, $\nu \colon X \to \RR_{\geq 0}$
  satisfies
  \[
  \lvert \langle \nu - 1, \varphi \rangle\vert \leq \e' \text{ for all
  } \varphi \in \cF^k,
  \]
  and $f \colon X
  \to \RR_{\geq 0}$ satisfies $f \leq \nu$ and $\EE[f] \leq 1$.
  Then there is a function $\tf \colon X \to [0,1]$ such that
  $\EE[\tf] = \EE[f]$, and
  \[
  \lvert\langle f - \tf, \varphi \rangle\vert \leq \e \text{ for all }
  \varphi \in \cF.
  \]
\end{lemma}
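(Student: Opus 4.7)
The plan is to apply Lemma~\ref{lem:GTZ-DMT} with $X = G$ and the collection $\cF$ consisting of all functions $\varphi \colon G \to [0,1]$ of the form
\[
\varphi(y) = \EE\Bigl[\prod_{i=1}^r u_i(x_{[r]\setminus\{i\}}) \,\Big\vert\, x \in G^r,\ \psi(x) = y \Bigr],
\]
where $u_1, \dots, u_r \colon G^{r-1} \to [0,1]$. Since $\psi$ is a surjective homomorphism of finite abelian groups, every fiber has size $\sabs{G}^{r-1}$, so unfolding the conditional expectation shows that the conclusion $\sabs{\ang{f - \tf, \varphi}} \leq \e$ for every $\varphi \in \cF$ is literally the $(r,\e)$-discrepancy pair condition for $(f, \tf)$. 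Thus the entire task is to verify the hypothesis of Lemma~\ref{lem:GTZ-DMT}: with $k = (1/\e)^{O(1)}$ and $\e' = \exp(-(1/\e)^{O(1)})$ as supplied there, we must check that $\sabs{\ang{\nu - 1, \varphi}} \leq \e'$ for every $\varphi \in \cF^k$.

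The key computation is a factorization that reduces this condition on $\cF^k$ to the single-tuple discrepancy hypothesis. Given $\varphi_1, \dots, \varphi_{k'} \in \cF$ with $k' \leq k$, write $\varphi_j(y) = \EE[\prod_i u_i^{(j)}(x^{(j)}_{[r]\setminus\{i\}}) \vert \psi(x^{(j)}) = y]$. By independence of the $x^{(j)}$'s, the product $\prod_j \varphi_j(y)$ equals the joint expectation $\EE[\prod_{j,i} u_i^{(j)}(x^{(j)}_{[r]\setminus\{i\}})]$ over independent tuples $x^{(j)} \in \psi^{-1}(y)$. Parameterizing $x^{(j)} = x + \Delta^{(j)}$ with $x \in G^r$ and $\Delta^{(j)} \in \ker \psi$ (and $\Delta^{(1)} := 0$) gives
\begin{align*}
\ang{\nu - 1,\ \textstyle\prod_{j=1}^{k'} \varphi_j}
= \EE_{\Delta^{(2)}, \dots, \Delta^{(k')} \in \ker \psi}\, \EE_{x \in G^r}\Bigl[\, (\nu(\psi(x)) - 1) \prod_{i=1}^r \tilde u_i(x_{[r]\setminus\{i\}}) \,\Bigr],
\end{align*}
where $\tilde u_i(x_{[r]\setminus\{i\}}) := \prod_{j=1}^{k'} u_i^{(j)}((x + \Delta^{(j)})_{[r]\setminus\{i\}})$.

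The crucial point is that each factor $u_i^{(j)}$ does not depend on the $i$-th coordinate of its input, so the $(i,j)$-indexed product regroups as $r$ functions $\tilde u_i$ of $x_{[r]\setminus\{i\}}$ alone, each again valued in $[0,1]$. Hence for every fixed choice of the $\Delta^{(j)}$'s, the inner expectation lies in the exact form controlled by the $(r, \e')$-discrepancy pair condition on $(\nu, 1)$, so is at most $\e'$ in absolute value. Averaging over the $\Delta$'s preserves the bound, uniformly in $k'$, so $\sabs{\ang{\nu - 1, \varphi}} \leq \e'$ on all of $\cF^k$. Lemma~\ref{lem:GTZ-DMT} then produces the desired $\tf$. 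The main obstacle is identifying the right collection $\cF$ and carrying out the regrouping above; it relies on the special product structure of the test functions in the discrepancy pair condition, which is preserved (up to the harmless $\Delta$-averaging) under products of conditional expectations over fibers of $\psi$, so that no dependence on $k$ appears in the resulting bound.
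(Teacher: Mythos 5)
What you have written is not a proof of Lemma~\ref{lem:GTZ-DMT}: your opening move is ``apply Lemma~\ref{lem:GTZ-DMT}'', so as an argument for that statement it is circular. What you have actually proved is Lemma~\ref{lem:disc-pair-transference}, the transference step that \emph{uses} the dense model theorem to produce a discrepancy-pair model $\tf$ for $f$. Note that the paper itself does not prove Lemma~\ref{lem:GTZ-DMT}; it quotes it verbatim from the exposition \cite{RTTVnote} and treats it as a black box. A genuine proof of the dense model theorem requires an entirely different set of ideas that is absent from your write-up: either the iterative energy-increment/boosting argument of \cite{GT08,TZ08}, or the Hahn--Banach/linear-programming duality argument of \cite{Gow10,RTTV08}, in which one shows that if no admissible $\tf$ existed, then a bounded combination of at most $k = (1/\e)^{O(1)}$ functions from $\cF$ (this is where the exponent $k$ and the threshold $\e' = \exp(-(1/\e)^{O(1)})$ actually come from) would separate $f$ from the convex set of $[0,1]$-valued functions of the same mean, and hence would force $\lvert\langle \nu - 1, \varphi\rangle\rvert > \e'$ for some $\varphi \in \cF^k$. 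None of that machinery appears here, so the statement you were asked to prove remains unproved.

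For what it is worth, the content you did supply is correct and closely parallels the paper's proof of Lemma~\ref{lem:disc-pair-transference}: you take $\cF$ to be the generalized convolutions $(u_1,\dots,u_r)^*_\psi$, observe that the conclusion of Lemma~\ref{lem:GTZ-DMT} for this $\cF$ is exactly the $(r,\e)$-discrepancy condition, and verify the hypothesis on $\cF^k$ by expanding a product of $k'$ convolutions as an average over shifts $\Delta^{(j)} \in \ker\psi$ of single convolutions with modified $[0,1]$-valued weights $\tilde u_i$. The paper packages the same regrouping as the claim that the convex hull of such convolutions is closed under multiplication, so that $\cF^k = \cF$ and the parameter $k$ plays no role; your direct verification on $\cF^k$ is an equivalent way of saying the same thing. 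But this is the proof of a different lemma from the one posed.
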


We shall use Lemma~\ref{lem:GTZ-DMT} with $\cF$  closed
under multiplication, so that $k$ plays no role. This is an important
point in our simplification over previous approaches using the dense
model theorem.

\begin{proof}[Proof of Lemma~\ref{lem:disc-pair-transference}]
  For any collection of functions $u_1, \dots, u_r \colon G^{r-1}
  \to \RR$, define a generalized convolution $(u_1, \dots, u_r)^*_\psi
  \colon G \to \RR$ by
  \[
  (u_1, \dots, u_r)^*_\psi(x)
  =
  \EE\Bigl[ \prod_{i=1}^r u_i(y_{[r]\setminus\{i\}})
  \Big\vert
  y \in G^r, \ \psi(y) = x
  \Bigr].
  \]
  Then the left-hand side of \eqref{eq:disc-pair} can be written as
  $\lvert \langle f - \tf, (u_1, \dots, u_r)^*_\psi
  \rangle\rvert$. Let
  $\cF$ be the set of functions which can be obtained by convex
  combinations of functions of the form $(u_1, \dots, u_r)^*_\psi$,
  varying over all combinations of functions $u_1, \dots, u_r \colon
  G^{r-1} \to [0,1]$ (but $\psi$ is fixed). Then $(f,\tf)$ being an
      $(r,\e')$-discrepancy pair with respect to $\psi$ is equivalent
      to $\lvert \langle f - \tf, \varphi
  \rangle\rvert \leq \e$ for all $\varphi \in \cF$. The desired
      claim would then follow from Lemma~\ref{lem:GTZ-DMT} and the triangle inequality provided
      we can show that $\cF$ is closed under multiplication. It suffices to show that for $u_1,
  \dots, u_r, u'_1, \dots, u'_r \colon G^{r-1} \to [0,1]$, the product
  of
  $(u_1, \dots, u_r)^*_\psi$ and $(u'_1, \dots, u'_r)^*_\psi$ still lies in
  $\cF$. Indeed, we have
  \begin{align*}
    (u_1, \dots, u_r)^*_\psi(x) (u'_1, \dots, u'_r)^*_\psi(x)
    &=   \EE\Bigl[ \prod_{i=1}^r u_i(y_{[r]\setminus\{i\}}) u'_i(y'_{[r]\setminus\{i\}})
  \Big\vert
  y,y' \in G^r, \ \psi(y) = \psi(y') = x
  \Bigr]
  \\
      &=   \EE\Bigl[ \prod_{i=1}^r u_i(y_{[r]\setminus\{i\}})
      u'_i(y_{[r]\setminus\{i\}} + z_{[r]\setminus\{i\}})
  \Big\vert
  y, z \in G^r, \ \psi(y) = x, \psi(z) = 0
  \Bigr]
  \\
      &=   \EE[(v_{1,z_{[r]\setminus \{1\}}},
      v_{2,z_{[r]\setminus\{2\}}}, \dots,
      v_{r,z_{[r]\setminus\{r\}}})_\psi^* (x) \vert z
      \in G^r, \ \psi(z) = 0]
  \end{align*}
  where $v_{i,z_{[r]\setminus\{i\}}} \colon G^{r-1} \to [0,1]$ is
  defined by $v_{i,z_{[r]\setminus\{i\}}}(y_{[r]\setminus\{i\}}) = u_i(y_{[r]\setminus\{i\}})
      u'_i(y_{[r]\setminus\{i\}} + z_{[r]\setminus\{i\}})$. This shows
      that the product of two such generalized convolutions is a
      convex combination of generalized convolutions, so that $\cF$ is
      closed under multiplication.
\end{proof}

\section{Counting lemma} \label{sec:count}

Next we show that if $(f,\tf)$ is a $(k-1,\e)$-discrepancy pair, with
$f \leq \nu$ and $\tf \leq 1$, then $f$ and $\tf$ have similar number of
(weighted) $k$-term APs. This is a special case of the counting lemma
for sparse hypergraphs from \cite{CFZrelsz}, whose self-contained
proof takes up about 4 pages \cite[Sec.~6]{CFZrelsz}.

\begin{lemma}[$k$-AP counting lemma]
  \label{lem:AP-counting}
  For every $k \geq 3$ and $\gamma > 0$, there exists an $\e > 0$ so
  that the following holds.

  Let $\nu, f, \tf \colon \ZZ_N \to \RR_{\geq 0}$ be
  functions. Suppose that $\nu$ satisfies the
  $k$-linear forms condition and $N$ is sufficiently large. Suppose
  also that $0 \leq f \leq \nu$, $0 \leq \tf \leq 1$, and $(f,\tf)$ is
  a $(k-1,\e)$-discrepancy pair with respect to each of $\psi_1, \dots,
  \psi_k$, where $\psi_j \colon \ZZ_N^{k-1} \to \ZZ_N$ is defined by
  \[
  \psi_j(x_1, \dots, x_{j-1}, x_{j+1}, \cdots, x_k) := \sum_{i \in
    [k]\setminus\{j\}} (i-j)x_i.
  \]
  Then
  \begin{equation}\label{eq:AP-counting}
    \Bigl\lvert \EE\Bigl[ \prod_{i=0}^{k-1} f(a +id) \Big\vert a,d \in
    \ZZ_N \Bigr]
    -
    \EE\Bigl[ \prod_{i=0}^{k-1} \tf(a +id) \Big\vert a,d \in
    \ZZ_N \Bigr]
    \Bigr\rvert
    \leq \gamma.
    \end{equation}
\end{lemma}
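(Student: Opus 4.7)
The plan is to prove Lemma~\ref{lem:AP-counting} by (1) a change of variables expressing the $k$-AP count in terms of the $\psi_j$, (2) telescoping the difference into $k$ terms indexed by $j$, and (3) bounding each term by combining the $(k-1,\e)$-discrepancy pair hypothesis on $(f,\tf)$ at $\psi_j$ with the $k$-linear forms condition on $\nu$, glued together by iterated Cauchy-Schwarz.

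For step (1), set $S=\sum_i x_i$ and $T=\sum_i ix_i$, so that $\psi_j(x_{[k]\setminus\{j\}})=T-jS$: the values $\psi_1,\dots,\psi_k$ are themselves a $k$-AP in $j$, with first term $T-S$ and common difference $-S$. The associated linear map $(x_1,\dots,x_k)\mapsto(T-S,-S)\in\ZZ_N^2$ is surjective with uniform fibres of size $N^{k-2}$ because its $2\times 2$ minor on columns $1$ and $2$ equals $1$; the coprimality of $N$ and $(k-1)!$ further ensures that every coefficient $i-j$ entering \eqref{eq:Z-lfc} is a unit, so that the $k$-linear forms condition applies to all the sub-families of forms that will appear below. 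Thus for any $g:\ZZ_N\to\RR$,
\[
\EE_{a,d\in\ZZ_N}\Bigl[\prod_{j=1}^k g(a+(j-1)d)\Bigr] = \EE_{x\in\ZZ_N^k}\Bigl[\prod_{j=1}^k g(\psi_j(x_{[k]\setminus\{j\}}))\Bigr].
\]
Applying this identity to both $f$ and $\tf$, step (2) telescopes as
\[
\EE_{a,d}\Bigl[\prod_j f(a+(j-1)d)-\prod_j \tf(a+(j-1)d)\Bigr] = \sum_{j=1}^k E_j,
\]
\[
E_j := \EE_{x\in\ZZ_N^k}\Bigl[\Bigl(\prod_{i<j}\tf(\psi_i)\Bigr)(f-\tf)(\psi_j)\Bigl(\prod_{i>j}f(\psi_i)\Bigr)\Bigr],
\]
and it suffices to prove $|E_j|\le\gamma/k$ for each $j$, provided $\e=\e(\gamma,k)$ is chosen small enough.

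For step (3), observe first that since $\psi_j$ does not involve $x_j$, one can integrate out $x_j$ to write $E_j=\EE_{x_{[k]\setminus\{j\}}}[(f-\tf)(\psi_j)\cdot W]$ for some weight $W$ on $\ZZ_N^{k-1}$ built from the remaining $\psi_i$-factors. The $(k-1,\e)$-discrepancy pair hypothesis with respect to $\psi_j$ would directly yield the bound $\e$ if $W$ factored as $\prod_{i\neq j}u_i(x_{[k]\setminus\{i,j\}})$ with $u_i:\ZZ_N^{k-2}\to[0,1]$. To force that shape, I would apply Cauchy-Schwarz iteratively in the variables $x_l$ for $l\neq j$: each application doubles one variable into a pair $(x_l^{(0)},x_l^{(1)})$ and squares the factor attached to it. The $f$-factors thereby become bounded by products of $\nu$-translates evaluated at shifted copies of the $\psi_i$; after all $k-1$ doublings the full list of $\nu$-translates that appears is precisely a sub-family of the $k\cdot 2^{k-1}$ forms $\sum_i(i-j')x_i^{(\omega_i)}$ enumerated in Definition~\ref{def:Z-lfc}, so the $k$-linear forms condition bounds the accumulated $\nu$-product by $1+o(1)$. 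The surviving core quantity is exactly of the shape accepted by the discrepancy pair hypothesis and is therefore bounded by $\e$. Taking the appropriate root and choosing $\e$ small in terms of $\gamma,k$ delivers $|E_j|\le\gamma/k$.

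The main obstacle is the combinatorial bookkeeping in step (3): one has to arrange the Cauchy-Schwarz doublings so that at every stage the accumulated $\nu$-product lands inside a sub-product of \eqref{eq:Z-lfc}, including the erasure configurations where some exponents $n_{j,\omega}$ vanish. Definition~\ref{def:Z-lfc} was tailored precisely to make this procedure succeed, and the detailed verification, which is not specific to arithmetic progressions, is the essential content of \cite[Sec.~6]{CFZrelsz}.
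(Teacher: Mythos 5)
Your steps (1) and (2) are correct and coincide with how the paper sets things up: the substitution $a=\psi_1(x_2,\dots,x_k)$, $d=-(x_1+\cdots+x_k)$ (your $(T-S,-S)$) identifies the $k$-AP average with $\EE\bigl[\prod_{j} g(\psi_j(x_{[k]\setminus\{j\}}))\,\big\vert\, x\in\ZZ_N^k\bigr]$, and telescoping into $k$ terms, each containing a single factor $(f-\tf)(\psi_j)$, is the right start. The genuine gap is in step (3), and it is not ``combinatorial bookkeeping'': the claim that after the iterated Cauchy--Schwarz ``the surviving core quantity is exactly of the shape accepted by the discrepancy pair hypothesis'' is false. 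When you apply Cauchy--Schwarz in the variable $x_l$ ($l\neq j$), the only factor you can pull out is the one not depending on $x_l$, namely $f(\psi_l)$ or $\tf(\psi_l)$; the factor $(f-\tf)(\psi_j)$ depends on every $x_l$ with $l\neq j$ and therefore gets doubled at each of the $k-1$ steps. What survives is a $\nu$-weighted box-norm-type expression containing $2^{k-1}$ copies of $(f-\tf)(\psi_j(x^{(\omega)}))$, $\omega\in\{0,1\}^{[k]\setminus\{j\}}$. This is not linear in $f-\tf$ tested against $[0,1]$-valued functions: if you try to regard all but one copy of $f-\tf$ as the test functions $u_i$, they are bounded only by $\nu+1$ (and leftover $\nu$-weights remain), so the $(k-1,\e)$-discrepancy hypothesis does not apply. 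In effect your step (3) is the Green--Tao generalized von Neumann argument, which reduces the AP count to a Gowers/box norm of $f-\tf$; but smallness of that norm is precisely the stronger information this paper deliberately does not have, since the dense model theorem is applied only with the weak discrepancy (cut-type) norm. Passing from ``discrepancy small'' to ``counts agree'' when the majorant is merely $\nu$ is the entire content of the lemma, and your sketch skips it.

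Note also that the paper does not reprove this statement: its treatment of the lemma is purely a reduction to the relative hypergraph counting lemma \cite[Thm.~2.17]{CFZrelsz}, via the dictionary $g_{[k]\setminus\{j\}}=f\circ\psi_j$, $\tg_{[k]\setminus\{j\}}=\tf\circ\psi_j$, $\nu_{[k]\setminus\{j\}}=\nu\circ\psi_j$ together with your change of variables. The argument in \cite[Sec.~6]{CFZrelsz} that you would need is not an iterated Cauchy--Schwarz of the kind you describe but a densification argument: the sparse factors $f(\psi_i)$, $i\neq j$, are replaced one at a time by bounded (conditional-expectation-type) functions, with the linear forms condition used, via Cauchy--Schwarz, both to show that the replacement error is small and that the replacing function is essentially bounded by $1$; only once all factors other than $(f-\tf)(\psi_j)$ are $1$-bounded does one fix $x_j$ and invoke the discrepancy hypothesis directly. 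So either quote \cite[Thm.~2.17]{CFZrelsz} with the explicit dictionary, as the paper does, or carry out the densification; as written, the key step of your argument would fail.
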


Let us explain why Lemma~\ref{lem:AP-counting} is a special case of \cite[Thm.~2.17]{CFZrelsz}.
We use the hypergraph notation from \cite[Sec.~2]{CFZrelsz}.
  Let $V = (J, (V_j)_{j\in J}, k-1, H)$ be a hypergraph system, where
  $J = [k]$, $V_j = \ZZ_N$ for every $j \in J$, and $H = \binom{J}{k-1}$ (corresponding to a simplex). Let
  $(\nu_e)_{e \in H}$, $(g_e)_{e \in H}$, and $(\tg_e)_{e \in H}$ be weighted hypergraphs on $V$ defined by
  \begin{align*}
  \nu_{[k]\setminus\{j\}}(x_{[k]\setminus\{j\}}) = \nu(\psi_j(x_{[k]\setminus\{j\}}))
\\
  g_{[k]\setminus\{j\}}(x_{[k]\setminus\{j\}}) = f(\psi_j(x_{[k]\setminus\{j\}}))
\\
  \tg_{[k]\setminus\{j\}}(x_{[k]\setminus\{j\}}) = \tf(\psi_j(x_{[k]\setminus\{j\}}))
  \end{align*}
  for $j \in [k]$ and $x_{[k]\setminus\{j\}} \in V_{[k]\setminus\{j\}} =
  \ZZ_N^{k-1}$. Then the weighted hypergraph $(\nu_e)_{e \in H}$ satisfies
  the $H$-linear forms condition \cite[Def.~2.8]{CFZrelsz} (which is equivalent to $\nu \colon \ZZ_N
  \to \RR_{\geq 0}$ satisfying
  the $k$-linear forms condition). That $(f,\tf)$ is a
  $(k-1,\e)$-discrepancy pair with respect to $\psi_j$ is equivalent to
  $(g_{[k]\setminus\{j\}},\tg_{[k]\setminus\{j\}})$ being an $\e$-discrepancy
  pair as  weighted hypergraphs \cite[Def.~2.13]{CFZrelsz}. Note that
\[
\EE\Bigl[ \prod_{i=0}^{k-1} f(a +id) \Big\vert x,d \in
    \ZZ_N \Bigr]
=
\EE\Bigl[ \prod_{e \in H} g_e(x_e) \Big\vert x \in V_J \Bigr]
\]
(to see this, let $a = \psi_1(x_2, \dots, x_k)$ and $d = - (x_1 + \cdots + x_k)$) and similarly with $\tf$ and $\tg_e$.
Then the relative hypergraph
  counting lemma \cite[Thm.~2.17]{CFZrelsz} reduces to Lemma~\ref{lem:AP-counting}.

\section{Proof of the relative Szemer\'edi theorem} \label{sec:relsz}

\begin{proof}[Proof of Theorem~\ref{thm:rel-sz}]
We begin with the
   following simple observation, that for any $g, g' \colon \ZZ_N \to
   \RR_{\geq 0}$, if $(g,g')$ is a $(k-1,\e)$-discrepancy pair with
   respect to one $\psi_j$ from Lemma~\ref{lem:AP-counting}, then it is so with respect to all
   $\psi_j$. This is simply because $1, 2, \dots, k-1$ all have
   multiplicative inverses in $\ZZ_N$, as $N$ is coprime to $(k-1)!$,
   and a scaling of variables in \eqref{eq:disc-pair} allows one to
   convert one linear form $\psi_j$ to another $\psi_{j'}$.

   The linear forms condition on $\nu$ implies that $(\nu, 1)$ is a
$(k-1,o(1))$-discrepancy pair with respect to $\psi_1$ from
Lemma~\ref{lem:AP-counting}. Indeed, we have the following inequality
\begin{equation}
  \label{eq:nu-1-disc}
  \Bigl\lvert \EE\Bigl[ (\nu(\psi_1(x)) - 1)\prod_{i=1}^r
  u_i(x_{[r]\setminus\{i\}}) \Big\vert x \in G^r \Bigr]\Bigr\rvert
  \leq
  \EE\Bigl[ \prod_{\omega\in\{0,1\}^r} (\nu(\psi_1(x^{(\omega)})) - 1)
  \Big\vert x^{(0)},x^{(1)} \in G^r \Bigr]^{1/2^r}
\end{equation}
which is proved by a sequence of Cauchy-Schwarz inequalities, similar
to \cite[Lem.~6.2]{CFZrelsz}. The right-hand side of \eqref{eq:nu-1-disc} is $o(1)$ by the linear forms condition (expand
the product so that each term is $\pm 1 + o(1)$ by \eqref{eq:Z-lfc}, and
everything cancels accordingly).

   Since $(\nu, 1)$ is a $(k-1,o(1))$-discrepancy pair with respect to
   $\psi_1$, Lemma~\ref{lem:disc-pair-transference} implies that
   there exists
$\tf \colon G \to [0,1]$ so that $\EE[\tf] = \EE[f] \geq \delta$ (if $\EE[f] > 1$, then replace $f$ by $\delta f / \EE[f]$)
   and $(f,\tf)$ is a $(k-1,o(1))$-discrepancy pair with respect to
   $\psi_1$, and hence with respect to all $\psi_j$, $1 \leq j \leq
   k$. So
   \begin{align*}
    \EE\Bigl[ \prod_{i=0}^{k-1} f(x +id) \Big\vert x,d \in
    \ZZ_N \Bigr]
    &\geq
    \EE\Bigl[ \prod_{i=0}^{k-1} \tf(x +id) \Big\vert x,d \in
    \ZZ_N \Bigr]
 - o(1)
 \\
    &\geq c(k,\delta) -o_{k,\delta}(1),
  \end{align*}
  where the first inequality is by Lemma~\ref{lem:AP-counting} and the
  second inequality is by Theorem~\ref{thm:sz-weighted}.
\end{proof}

\subsection*{Acknowledgments}
The author would like to thank Jacob Fox and David Conlon for careful readings of the manuscript.

%\bibliographystyle{amsplain}
%\bibliography{ref-arithrelsz}

\end{document}